\theoremstyle{theorem}
\newtheorem{thm}{Theorem}[section]
\newtheorem{cor}[thm]{Corollary}
\newtheorem{lem}[thm]{Lemma}
\numberwithin{equation}{section}
\def\barlp{\overline{\Delta}}
\def\R{\mathbb{R}^{n+1}(c)}
\def\l{\langle}
\def\r{\rangle}
\begin{document}

\title[Triharmonic CMC hypersurfaces in space forms]{Triharmonic CMC hypersurfaces in space forms with 4 distinct principal curvatures}

\author[H. Chen ]{Hang Chen}
\address{School of Mathematics and Statistics, Northwestern Polytechnical University, Xi' an 710072,  P. R. China}
\email{\href{mailto:chenhang86@nwpu.edu.cn}{chenhang86@nwpu.edu.cn}}

\author[Z. Guan]{Zhida Guan}
\address{Department of Mathematical Sciences, Tsinghua University, Beijing 100084, P. R. China}
\email{\href{mailto:}{gzd15@mails.tsinghua.edu.cn}}

\begin{abstract}
A triharmonic map is a critical point of the tri-energy in the space of smooth maps between two Riemannian manifolds. In this paper, we prove that if $M^{n} (n\ge 4)$ is a CMC proper triharmonic hypersurface in a space form $\mathbb{R}^{n+1}(c)$ with four distinct principal curvatures and the multiplicity of the zero principal curvature is at most one, then $M$ has constant scalar curvature. In particular, we obtain any CMC proper triharmonic hypersurface in $\mathbb{R}^5(c)$ is minimal when $c\le 0$, which supports the generalized Chen's conjecture. We also give some characterizations of CMC proper triharmonic hypersurfaces in $\mathbb{S}^5$.
\end{abstract}

\keywords {Triharmonic hypersurfaces, Constant mean curvature, Generalized Chen's conjection, Rigidity}

\subjclass[2020]{Primary 58E20, 53C43; Secondary 53C42.}

\maketitle

\section{Introduction}\label{sect:1}

A $k$-harmonic map $\phi$, introduced by Eells and Sampson \cite{ES1964}, is a critical point of the $k$-energy functional
\begin{equation*}
	E_k(\phi)=\frac{1}{2} \int_M \big|(d+d^{\ast})^k \phi\big|^2 v_g,
\end{equation*}
where $\phi:(M,g)\rightarrow(N,\bar{g})$ is a smooth map. It is equivalent to the $k$-tension field $\tau_k(\phi)\equiv 0$ from the Euler-Lagrange equations. This concept is a generalization of the classical ``harmonic map'' for $k=1$,
%
and the critical point of $E_k$ is usually called a biharmonic or triharmonic map for $k=2$ or $k=3$ respectively. G.~Jiang \cite{Jiang1986} computed the first and second variational formulae of $E_2$, and there have been a lot of results on biharmonic maps; the readers can refer to the very recent book by Ou and Chen \cite{OuChen}. For $k\ge 3$, the first and the second variational formulae of $E_k$ were obtained by S.~Wang \cite{Wang1989} and by Maeta \cite{Maeta2012a}  respectively, and $k$-harmonic maps have been widely studied as well, see \cite{ACL1983, BG1992,BL2002,Maeta2012,Maeta2012a,Maeta2015,NU2018,Ou2006} and the references therein. 

$M$ is called a $k$-harmonic submanifold of $N$ if $\phi: M\to N$ is an isometric and $k$-harmonic immersion.  A well-known fact says that  $M$ is harmonic if and only if $M$ is minimal since $\tau_1(\phi)=n\mathbf{H}$, where $n=\dim M$ and $\mathbf{H}$ denotes the mean curvature vector of $M$ in $N$. On the other hand, a harmonic map is always $k$-harmonic, but $k$-harmonic doesn't mean $l$-harmonic for $1\le l<k$ (cf. \cite{Wang1989, Maeta2012a}). 
Hence, a natural and important question raised in studying $k$-harmonic maps is that when a $k$-harmonic map $\phi$ (resp. a submanifold $M$) is  ``\emph{proper}'', namely, $\phi$ (resp. $M$) is not harmonic (resp. minimal). 
A famous conjecture says that any $k$-harmonic submanifold of the Euclidean space is minimal. This conjecture is usually called the generalized Chen's conjecture, since it was originally proposed by B.-Y. Chen \cite{Chen1991} for $k=2$ and extended to $k\ge 3$ by Maeta  \cite{Maeta2012}. This conjecture is not yet settled completely, but there are known partial results supporting it \cite{BFO2010,NU2011,NU2013,NUG2014,FHZ20}.
 
In this paper, we focus on the case $k=3$, which is a particularly active subject recently (cf. \cite{Maeta2015a,MNU2015,MOR19,MR18,MR18a,Wang1991}). For instance, Maeta-Nakauchi-Urakawa \cite{MNU2015} proved that a triharmonic isometric immersion into a Riemannian manifold of non-positive curvature must be minimal under certain suitable conditions. Maeta proved that any compact constant mean curvature (CMC in short) triharmonic hypersurface $M^n$ in $\mathbb{R}^{n+1}(c) (c\leq 0)$ is minimal (see \cite[Proposition 4.3]{Maeta2012}), and Montaldo-Oniciuc-Ratto removed the compactness assumption for $n=2$ very recently since there are at most two distinct principal curvatures for a surface $M^{2}$ (see \cite[Theorem 1.3]{MOR19}). Here $\mathbb{R}^{n+1}(c)$ represents the $(n+1)$-dimensional Euclidean space $\mathbb{R}^{n+1}$, the hyperbolic space $\mathbb{H}^{n+1}$ and the unit sphere $\mathbb{S}^{n+1}$ for $c = 0, -1$ and $1$ respectively.  

In the previous paper, we considered the case of at most three distinct principal curvatures and proved 
\begin{thm}[{\cite[Theorem 1.1]{CG21}}]\label{thmA}
	Let $M^{n} (n\ge 3)$ be a CMC proper triharmonic hypersurface with at most three distinct principal curvatures in $\R$. 
	Then the scalar curvature of $M$ is constant.
\end{thm}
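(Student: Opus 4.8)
The plan is to translate the condition $\tau_3(\phi)\equiv 0$ into a workable system of equations on $M$ and then run a moving-frame argument adapted to the gradient of $|A|^2$. First I would record the Gauss equation for a CMC hypersurface, $R = n(n-1)c + n^2H^2 - |A|^2$, which reduces the whole problem to showing that $|A|^2$ is constant: since $M$ is proper and CMC, $H$ is a nonzero constant, and $R$ constant is equivalent to $|A|^2$ constant. Next I would compute the tension fields. Because $H$ is constant we have $\nabla H = 0$ and, by the Codazzi equation in a space form, $\operatorname{div}A = n\nabla H = 0$; hence the bitension field collapses to $\tau_2 = H(nc - |A|^2)\nu$ (up to a universal constant). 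Feeding this into the expression for $\tau_3$ and separating tangential and normal parts, I expect the triharmonic system to reduce to (i) a tangential equation forcing $A(\nabla|A|^2)$ to be proportional to $\nabla|A|^2$, so that $\nabla|A|^2$ is everywhere a principal direction of $A$, and (ii) a normal equation of the form $\Delta|A|^2 = Q(H,|A|^2,\tr A^3;n,c)$, a second-order semilinear identity in the curvature invariants.

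Suppose, for contradiction, that $|A|^2$ is not constant, and work on the open set $U = \{\nabla|A|^2 \neq 0\}$. On $U$ set $E_1 = \nabla|A|^2/|\nabla|A|^2|$; by (i) this is a unit principal field, $AE_1 = \lambda_1 E_1$, and $E_j(|A|^2) = 0$ for every $E_j \perp E_1$. I would complete $E_1$ to a local orthonormal frame diagonalizing $A$ and label the principal curvatures $\lambda_1,\dots,\lambda_n$, of which at most three are distinct. The CMC condition gives $\sum_i\lambda_i = nH$ constant, while $|A|^2 = \sum_i\lambda_i^2$ varies only along $E_1$. The heart of the argument is to exploit the Codazzi equations $(\nabla_{E_i}A)E_j = (\nabla_{E_j}A)E_i$ to control the connection coefficients $\langle\nabla_{E_i}E_j,E_k\rangle$ and the directional derivatives $E_i(\lambda_k)$; together with the three-curvature hypothesis these relations should force all relevant quantities to depend on a single parameter (arclength along the integral curves of $E_1$) and thereby reduce the normal equation (ii) to an ordinary differential equation for $|A|^2$.

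Finally I would analyze this ODE together with the algebraic constraints coming from the three-eigenvalue structure. The expectation is that having only three distinct principal curvatures, combined with the fact that $|A|^2$ varies in exactly one principal direction, over-determines the system: the Codazzi relations governing the off-diagonal connection terms force the remaining principal curvatures and their multiplicities to behave in a way incompatible with $E_1(|A|^2)\neq 0$, yielding $\nabla|A|^2 \equiv 0$ on $U$ and hence a contradiction. It is natural to organize this into cases according to the number of distinct principal curvatures (one, the umbilic case, being immediate; two, being short; three being substantial), and within the three-curvature case according to which eigenvalue $\lambda_1$ coincides with and what the multiplicities are.

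The step I expect to be the main obstacle is the frame/Codazzi analysis in the genuine three-distinct-curvature case. When a principal curvature has multiplicity greater than one, the associated distribution is not one-dimensional, the connection coefficients satisfy a coupled system rather than a single ODE, and one must simultaneously ensure that the chosen frame is smooth and that the principal curvatures remain distinct on $U$. Controlling these coupled Codazzi relations — rather than the derivation of the reduced equations (i)--(ii), which is essentially a bookkeeping computation — is where the real difficulty lies.
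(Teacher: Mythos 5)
The theorem you were asked to prove is imported by this paper from \cite{CG21} without proof, but the same machinery is displayed in full in Sections 3--4 for the four-curvature case, and your outline matches only its outer shell (contradiction on $\mathcal{N}=\{\nabla S\neq 0\}$, a frame with $e_1$ parallel to $\nabla S$ diagonalizing $A$, Codazzi control of the $\Gamma_{ij}^k$, case analysis on the number of distinct curvatures). The first genuine gap is that you under-read the tangential equation. For a CMC hypersurface the triharmonic system is $H(\Delta S+S^2-cnS-cn^2H^2)=0$ together with $HA(\nabla S)=0$, so properness gives $A(\nabla S)=0$: on $\mathcal{N}$ the gradient of $S$ is an eigenvector of $A$ with eigenvalue \emph{exactly zero}, not merely ``proportional to $\nabla|A|^2$'' with an undetermined factor $\lambda_1$. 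This vanishing is not a cosmetic normalization; it is what makes the Gauss-equation computation of $R_{1j1j}=c$ collapse into the Riccati identity $e_1P_\alpha-P_\alpha^{2}=c$ for the logarithmic derivatives $P_\alpha=e_1\mu_\alpha/\mu_\alpha$ of the nonzero principal curvatures (Lemma \ref{lem2}), and what kills most connection coefficients in Lemma \ref{lem-Gijk}. Your outline never pins down this eigenvalue, so the frame/Codazzi analysis you propose cannot even be set up in the form the argument requires.

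The second gap is where you look for the contradiction. You plan to reduce the normal equation $\Delta S+S^2-cnS-cn^{2}H^{2}=0$ to an ODE for $|A|^2$ and extract the contradiction from that ODE; in the paper's method this equation is never used to prove that $S$ is constant --- it enters only afterwards (as in Corollary \ref{cor1}) to force minimality when $c\le 0$. The actual engine is purely structural: differentiating the CMC relation $nH=\sum_\alpha n_\alpha\mu_\alpha$ along $e_1$ repeatedly, and eliminating derivatives via the Riccati identity, yields the closed-form identities of Lemma \ref{lem3}, namely $\sum_\alpha n_\alpha\mu_\alpha P_\alpha^{q}=0$ for odd $q$ and $=\frac{(q-1)!!}{q!!}(-c)^{q/2}nH$ for even $q$. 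With only finitely many distinct curvatures these identities form a Vandermonde-type over-determined system in the $\mu_\alpha$'s, whose solvability forces either algebraic degeneracies (ruled out case by case with Lemma \ref{lem-Gijk}) or $e_1S=\sum_\alpha n_\alpha\mu_\alpha^{2}P_\alpha=0$, contradicting the definition of $\mathcal{N}$. Your intuition that the three-curvature hypothesis ``over-determines the system'' is correct, but without the zero eigenvalue, the Riccati identity, and the differentiated-CMC identities, the coupled Codazzi relations you yourself flag as the main obstacle remain exactly that: an obstacle your proposal supplies no means to overcome.
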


In this paper, we consider case of 4 distinct principal curvatures and prove the following theorem. 

\begin{thm}\label{thm1.1}
	Let $M^{n} (n\ge 4)$ be a CMC proper triharmonic hypersurface with 4 distinct principal curvatures in $\mathbb{R}^{n+1}(c)$. If the multiplicity of the zero principle curvature is at most one, then the scalar curvature of $M$ is constant. 
\end{thm}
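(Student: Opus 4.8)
The statement is equivalent to proving that $f:=|A|^2$ is constant. Indeed, by the Gauss equation the scalar curvature of $M$ equals $n(n-1)c+n^2H^2-f$, and since $c$ and $H$ are constant, constancy of the scalar curvature is the same as constancy of $f$. So the plan is to show $f\equiv\mathrm{const}$. Properness together with the CMC hypothesis forces $H$ to be a nonzero constant (otherwise $M$ is minimal). The starting point is the reduced form of the triharmonic system for a CMC hypersurface, as established in the preliminary section and already used for Theorem \ref{thmA}: writing $\mathbf{H}=H\xi$ and computing $\bar\Delta^2\tau$ together with the space-form curvature terms, $\tau_3=0$ splits into a \emph{normal} scalar equation that expresses
\[
\Delta f = \text{(a polynomial in } f,\ \tr A^3,\ \tr A^4 \text{ with coefficients depending on } c,H),
\]
and a \emph{tangential} equation whose leading term is $A\nabla f$, forcing $\nabla f$ to be everywhere a principal direction of $A$, i.e. $A\nabla f=\mu\,\nabla f$ for an explicit $\mu=\mu(c,H,f)$.

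Suppose, for contradiction, that $f$ is non-constant, and work on the open set $U=\{\,\nabla f\neq 0\,\}$. There I would diagonalize $A$ by a local orthonormal frame $\{e_i\}$ of principal directions, $Ae_i=\lambda_i e_i$, grouped into the four distinct values with multiplicities $m_1,\dots,m_4$ summing to $n$ (which requires exactly the hypothesis $n\ge 4$). By the tangential equation, $\nabla f$ lies in a single eigendistribution, say the $\lambda_1$-one; after normalizing $E:=\nabla f/|\nabla f|$ one has $AE=\lambda_1 E$ and $e(f)=0$ for every $e\perp E$. Feeding this into the Codazzi equations $(\nabla_{e_i}A)e_j=(\nabla_{e_j}A)e_i$ of the space form, which in the principal frame give $e_j(\lambda_i)=(\lambda_i-\lambda_j)\langle\nabla_{e_i}e_i,e_j\rangle$ and the accompanying structural identities for the connection coefficients, reduces the situation to a system of ODEs for the $\lambda_a$ along the integral curve of $E$, supplemented by algebraic side conditions coming from $e(f)=0$ in the directions orthogonal to $E$.

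Next I would combine this ODE system with the two standing constraints, the CMC relation $\sum_a m_a\lambda_a=nH$ and the identity $f=\sum_a m_a\lambda_a^2$, and with the normal scalar equation, which along $E$ becomes a second-order ODE for $f$ whose right-hand side is a polynomial in the $\lambda_a$. Differentiating the two constraints along $E$ expresses the derivatives of the $\lambda_a$ through $f'$ and the symmetric functions of the $\lambda_a$; substituting back, and using Newton's identities to pass between the power sums $\tr A^k$ and the elementary symmetric functions, yields an over-determined polynomial system in $\lambda_1,\dots,\lambda_4$ and $f'$. The aim is to show that this system is inconsistent unless $f'=0$, which is the desired contradiction and hence forces $f\equiv\mathrm{const}$.

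The decisive difficulty is precisely this elimination with \emph{four} unknown principal curvatures: the resulting relations are of high degree, the multiplicities $m_a$ enter as free parameters, and one must organize the computation (via Newton's identities or resultants) so that the only admissible solution branch has all $\lambda_a$ constant. The subcase in which one of the four values vanishes is the genuinely degenerate one: when $\lambda_4=0$, several Codazzi coefficients and factors of the form $(\lambda_a-\lambda_b)$ collapse, and the polynomial identities risk losing enough independent equations to conclude. The hypothesis that the zero principal curvature has multiplicity at most one is exactly what keeps the corresponding eigendistribution one-dimensional, preventing the integrable, totally geodesic degeneration and retaining enough equations to force $f'=0$. I expect this degenerate case to demand the most care and to be the main obstacle in completing the proof.
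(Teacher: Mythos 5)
Your setup matches the paper's (contradiction on the open set where $\nabla S\neq 0$, principal frame, Codazzi, CMC and trace constraints), but there are two genuine gaps, one conceptual and one structural. The conceptual one: for a CMC proper triharmonic hypersurface the tangential equation \eqref{tri-2'} reads $A(\nabla S)=0$ — the eigenvalue is exactly $0$, not some ``explicit $\mu=\mu(c,H,f)$''. Hence on $U=\{\nabla S\neq 0\}$ a zero principal curvature is forced to exist everywhere and $\nabla S$ lies in its eigenspace. You treat the vanishing of one principal curvature as a degenerate terminal subcase (``when $\lambda_4=0$\dots''), when in fact it is the universal and only situation; the hypothesis that the zero principal curvature has multiplicity at most one serves precisely to guarantee that on $U$ the kernel of $A$ is the single line spanned by $e_1=\nabla S/|\nabla S|$. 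The entire proof lives in the case you postponed as the ``main obstacle''.

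The structural gap: the engine that actually produces the contradiction is missing. You defer the crux to an unspecified elimination via Newton's identities or resultants among the $\lambda_a$ and $f'$, and acknowledge you do not know how to carry it out; that is exactly the step that requires an idea, and the paper's idea is different and sharper. Notably, the paper's proof of the theorem never uses the normal equation $\Delta S+S^2-cnS-cn^2H^2=0$ at all; it uses only $A(\nabla S)=0$, Gauss--Codazzi, and CMC. The mechanism is: (i) with $\lambda_1=0$ of multiplicity one, the Gauss equation $R_{1j1j}=c$ combined with the connection-coefficient identities of Lemma \ref{lem-Gijk} yields the Riccati equation $e_1P_\alpha-P_\alpha^{2}=c$ for the \emph{logarithmic} derivatives $P_\alpha=e_1\mu_\alpha/\mu_\alpha$ of the nonzero principal curvatures (Lemma \ref{lem2}); (ii) repeated differentiation of $\sum_\alpha n_\alpha\mu_\alpha=nH$ along $e_1$, with the Riccati equation substituted at each step, gives by induction the closed-form power sums $\sum_\alpha n_\alpha\mu_\alpha P_\alpha^{q}$, equal to $0$ for odd $q$ and to $\frac{(q-1)!!}{q!!}(-c)^{q/2}nH$ for even $q$ (Lemma \ref{lem3}); (iii) the odd cases $q=1,3,5$ form a homogeneous linear system with the nonzero solution $(r\lambda,s\mu,t\nu)$, so a Vandermonde-type determinant forces $PQR(P^{2}-Q^{2})(Q^{2}-R^{2})(R^{2}-P^{2})=0$, and the even cases $q=2,4$ plus a short case analysis (whose final branch forces $c=0$ and needs one more Gauss-equation computation to show the mixed coefficients $\Gamma_{jk}^{l}$ vanish) yield a contradiction in every branch. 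Your proposed variables — the $\lambda_a$ themselves, their ODEs along $E$, and $f'$ — do not generate such a closed recursive family of algebraic constraints; without the $P_\alpha$-Riccati structure (or a substitute for it) the ``over-determined polynomial system'' you invoke is not demonstrably inconsistent, so the argument does not close.
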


The Gauss equation \eqref{gauss} implies that, the scalar curvature being constant is equivalent to the squared norm of the second fundamental form being constant for a CMC hypersurface in the space form.  Therefore, as a direct consequence of Theorem \ref{thm1.1}, we obtain
\begin{cor}\label{cor1}
	Let $M^{n} (n\ge 4)$ be a CMC triharmonic hypersurface with four distinct principal curvatures in $\R (c\le 0)$. If the multiplicity of the zero principle curvature is at most one, then $M$ must be minimal.
	
	In particular, any CMC triharmonic hypersurface in $\mathbb{R}^{5}(c) (c\le 0)$ must be minimal.
\end{cor}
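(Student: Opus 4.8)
The plan is to show that, under the stated hypotheses, any CMC triharmonic hypersurface with $c\le 0$ must be minimal; equivalently, that a CMC \emph{proper} triharmonic hypersurface cannot occur in this setting. So suppose $M$ is not minimal. Then $M$ is proper triharmonic and, having four distinct principal curvatures with the multiplicity of the zero principal curvature at most one, it satisfies the hypotheses of Theorem~\ref{thm1.1}; hence its scalar curvature is constant. By the Gauss equation \eqref{gauss} this is equivalent to the squared norm of the second fundamental form $|A|^2$ being constant, so together with the CMC hypothesis both the mean curvature $H$ and $|A|^2$ are constant on $M$.

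I would next feed these constancy relations into the triharmonic system $\tau_3(\phi)=0$ for the isometric immersion $\phi\colon M\to\R$. For a CMC hypersurface the tangential component of $\tau_3(\phi)$ is built out of $A(\nabla|A|^2)$ and terms in $\nabla H$, all of which vanish once $H$ and $|A|^2$ are constant; thus the tangential equation is automatically satisfied and yields nothing. The normal component, which in general is a fourth–order expression in $H$ involving $\overline{\Delta}{}^2\tau$ together with the (parallel) space–form curvature contributions, collapses after setting $\nabla H=0=\nabla|A|^2$ to a purely algebraic identity $H\cdot P(\kappa_1,\dots,\kappa_n,c)=0$, where the $\kappa_i$ are the principal curvatures and $P$ is a polynomial in them and in $c$. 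Tracking the reduction of $\overline{\Delta}{}^2\tau$ and of the curvature terms (which in a space form are algebraic in $c$), the leading contribution to $P$ is a positive multiple of $(|A|^2-nc)^2$, the square of the biharmonic quantity $|A|^2-nc$.

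The crux is to verify that $P>0$ whenever $c\le 0$ and $M$ is not totally geodesic. For $c\le 0$ one has $|A|^2-nc\ge |A|^2\ge 0$, so the leading term is strictly positive unless $A\equiv 0$; the main obstacle is to show that the remaining contributions—in particular any term in $\tr A^3$, which is a priori sign–indefinite—do not spoil this positivity. I expect this to be handled by rewriting $P$ as a sum of manifestly nonnegative expressions in $|A|^2-nc$ and in the elementary symmetric functions of the $\kappa_i$. Granting $P>0$, the identity $HP=0$ forces $H\equiv 0$, contradicting properness, so $M$ is minimal. Note that this argument recovers Maeta's compact statement \cite[Proposition~4.3]{Maeta2012} and the surface case \cite[Theorem~1.3]{MOR19} with neither a compactness nor a low–dimensional hypothesis, once constancy of $|A|^2$ has been secured.

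For the final assertion, let $M^4\subset\mathbb{R}^5(c)$ with $c\le 0$ be CMC triharmonic, and split according to the number of distinct principal curvatures. If $M$ has at most three distinct principal curvatures, Theorem~\ref{thmA} gives constant scalar curvature directly. If $M$ has exactly four distinct principal curvatures then, since $\dim M=4$, each has multiplicity one, so the multiplicity of the zero principal curvature is at most one and Theorem~\ref{thm1.1} applies. In either case $|A|^2$ is constant, and the reduction above shows that $M$ is minimal, which proves the corollary.
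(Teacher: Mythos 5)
Your high-level strategy coincides with the paper's: invoke Theorem \ref{thm1.1} to make $S=|A|^2$ constant, substitute into the triharmonic system, and extract a sign contradiction from $c\le 0$; your treatment of the $n=4$ case (at most three distinct curvatures versus exactly four, each of multiplicity one) is also exactly the paper's. However, the crux of your argument is left unproved, and the route you sketch for it would fail. You never pin down the algebraic identity $P=0$; instead you predict that $P$ has leading term a positive multiple of $(|A|^2-nc)^2$ together with sign-indefinite contributions in $\tr A^3$, and you write that you ``expect'' positivity to follow from a sum-of-nonnegative-terms decomposition. That expectation is precisely the missing step. Worse, the predicted structure is wrong: for a CMC hypersurface the normal component of the triharmonic equation is already recorded in the paper as \eqref{tri-1'}, namely $\Delta S+S^2-cnS-cn^{2}H^{2}=0$ (the factor $H$ having been divided out), and it contains no $\tr A^3$ term at all; nor is $S^2-cnS-cn^2H^2$ equal to $(S-nc)^2$ plus nonnegative terms, since the difference is $nc\,(S-nH^2-nc)\le 0$ when $c\le 0$. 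So the decomposition you hope for does not exist in the form you describe, and as written the proof does not close.

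The fix is immediate and is what the paper does: once $S$ is constant, \eqref{tri-2'} is trivially satisfied and \eqref{tri-1'} reduces to $S^{2}=cnS+cn^{2}H^{2}$. For $c\le 0$ the right-hand side is nonpositive, while non-minimality gives $S\ge nH^{2}>0$, hence $S^{2}>0$, a contradiction. No re-derivation of the fourth-order tension field, no analysis of $\tr A^3$, and no positivity decomposition are needed --- term-by-term nonpositivity of $cnS$ and $cn^2H^2$ does everything. A side remark: your closing claim that this recovers Maeta's compact result and the surface case ``with neither a compactness nor a low-dimensional hypothesis'' overreaches, since your argument requires the hypotheses of Theorem \ref{thm1.1} (four distinct principal curvatures, zero principal curvature of multiplicity at most one), which those results do not assume.
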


When $c=1$, some examples of non-minimal $k$-harmonic hypersurfaces in a sphere are given and some classifications and characterizations are obtained, see \cite{WW12, MOR19, CG21} for instance. Based on these results and Theorem \ref{thm1.1}, we have

\begin{cor}\label{cor1.3}
	Let $M^{n} (n\ge 4)$ be a proper CMC  triharmonic hypersurface with four distinct principal curvatures in $\mathbb{S}^{n+1}$. If the multiplicity of the zero principle curvature is at most one, then either 
	
	(1) $H^{2}=2$ and $M$ is locally $\mathbb{S}^{n}(1/\sqrt{3})$; or
	
	(2) $H^2\in (0, t_0]$, and $H^{2}=t_0$ if and only $M$ is locally $\mathbb{S}^{n-1}(a)\times\mathbb{S}^{1}(\sqrt{1-a^{2}})$. Here $t_0$ is the unique real root belonging to $(0, 2)$ of the polynomial 
	\begin{equation*}
		f_{n,3}(t)=n^4t^3-2n^2(n^2-5n+5)t^2-(n-1)(2n-5)(3n-5)t-(n-1)(n-2)^2,
	\end{equation*}
	and the radius $a$ is given by
	\begin{equation}\label{eq-rad}
		a^{2}=\frac{2(n-1)^{2}}{n^{2}H^{2}+2n(n-1)+nH\sqrt{n^{2}H^{2}+4(n-1)}}.
	\end{equation}

In particular, the conclusions hold for a proper CMC triharmonic hypersurface in $\mathbb{S}^{5}$ without any extra assumptions.
\end{cor}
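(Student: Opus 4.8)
The plan is to derive Corollary~\ref{cor1.3} from Theorem~\ref{thm1.1} and Theorem~\ref{thmA} together with the known examples and classifications in \cite{WW12, MOR19, CG21}. The first step is to pass from the scalar curvature to the second fundamental form: by the Gauss equation~\eqref{gauss}, for a CMC hypersurface the scalar curvature is constant if and only if $|A|^2$ is constant. Thus Theorem~\ref{thm1.1} already tells us that any $M$ satisfying the hypotheses has $|A|^2$ constant. For the final assertion about $\mathbb{S}^{5}$ I would first explain why no restriction on the number of principal curvatures is needed when $n=4$: a hypersurface $M^{4}\subset\mathbb{S}^{5}$ with four distinct principal curvatures must have all four multiplicities equal to one, so the multiplicity of a zero principal curvature is automatically at most one and Theorem~\ref{thm1.1} applies; while the cases of at most three distinct principal curvatures are covered by Theorem~\ref{thmA}. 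Hence every proper CMC triharmonic hypersurface in $\mathbb{S}^{5}$ has constant $|A|^2$, with no extra hypothesis.

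With $H$ and $|A|^2$ both constant, the next step is to feed these into the triharmonic hypersurface equations. Under the CMC assumption the tangential part of the tri-tension field and the Laplacian terms in its normal part simplify drastically, and constancy of $|A|^2$ kills all remaining derivatives, collapsing the system to pointwise algebraic identities among $H$, $|A|^2$, $\tr(A^3)$ and $c$. Arguing exactly as in the proof of Theorem~\ref{thm1.1}, I would use these identities to show that the elementary symmetric functions of the principal curvatures are all constant, so that $M$ is isoparametric; by M\"unzner's theorem its number $g$ of distinct principal curvatures then lies in $\{1,2,3,4,6\}$. Combining this with the algebraic constraint coming from \emph{proper} triharmonicity should eliminate $g\ge 3$ and leave only $g=1$ and $g=2$, the surviving hypersurfaces being the totally umbilical small sphere and the generalized Clifford torus $\mathbb{S}^{n-1}(a)\times\mathbb{S}^{1}(\sqrt{1-a^{2}})$, matching the examples singled out in \cite{WW12, MOR19, CG21}.

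It then remains to extract the quantitative data in the two cases. For $g=1$ the immersion is umbilical with a single principal curvature $\kappa=H$, and the normal triharmonic identity reduces to $\kappa^{2}=2$, i.e. $H^{2}=2$; this pins $M$ down to $\mathbb{S}^{n}(1/\sqrt{3})$ and gives case~(1). For $g=2$ I would write the two principal curvatures $\lambda,\mu$ with multiplicities $(n-1,1)$, impose the CMC relation $(n-1)\lambda+\mu=nH$ together with the torus relation $\lambda\mu=-1$, and substitute into the triharmonic identity; eliminating $\lambda,\mu$ in favour of $t=H^{2}$ produces precisely the cubic $f_{n,3}(t)=0$, while solving the resulting quadratic for the radius yields the expression~\eqref{eq-rad} for $a^{2}$. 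A short sign analysis, using $f_{n,3}(0)=-(n-1)(n-2)^{2}<0$ and $f_{n,3}(2)>0$, shows $f_{n,3}$ has a unique root $t_0\in(0,2)$, and the same algebra forces $H^{2}\le t_0$ with equality exactly when $M$ is the Clifford torus; this is case~(2). The $\mathbb{S}^{5}$ statement is then the specialization $n=4$.

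The step I expect to be the main obstacle is the passage from ``$H$ and $|A|^2$ constant'' to ``$M$ is isoparametric'': constancy of the first two power sums of the principal curvatures does not by itself control the higher ones, so one must genuinely exploit the full triharmonic system (and, for general $n$, the four-distinct-principal-curvature structure) to force every symmetric function to be constant. Once isoparametricity is secured, the M\"unzner dichotomy, the exclusion of $g\ge 3$, and the computation of $f_{n,3}$ and of $a^{2}$ are routine, if somewhat lengthy, algebra.
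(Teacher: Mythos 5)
Your first step and your handling of the $\mathbb{S}^5$ case match the paper: Theorem \ref{thm1.1} plus the Gauss equation \eqref{gauss} give that $S$ is constant, and for $n=4$ the hypothesis of four distinct principal curvatures forces every multiplicity to be one, while the case of at most three distinct principal curvatures is covered by the earlier work (the paper cites \cite[Corollary 1.7]{CG21}, the sphere analogue of Theorem \ref{thmA}). But from that point on you diverge from the paper in a way that opens a genuine gap. The paper does \emph{not} re-derive the dichotomy (1)--(2); it simply invokes the known characterizations of CMC proper triharmonic hypersurfaces with constant scalar curvature in spheres, namely \cite[Theorem 1.9]{MOR19} and \cite[Theorem 1.5]{CG21}, where the cubic $f_{n,3}$, the bound $H^2\in(0,t_0]$, and the radius formula \eqref{eq-rad} are established. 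You instead propose to prove these from scratch via isoparametricity, and the step you yourself flag as "the main obstacle" is in fact fatal as described.

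Concretely: once $H$ and $S$ are both constant, the CMC triharmonic system \eqref{tri-1'}--\eqref{tri-2'} collapses to the single algebraic relation $S^2-nS-n^2H^2=0$ (with $c=1$); there are no "remaining identities" involving $\tr(A^3)$ or higher symmetric functions, so nothing forces $M$ to be isoparametric. Nor can you "argue exactly as in the proof of Theorem \ref{thm1.1}": that proof is a contradiction argument carried out entirely on the open set $\mathcal{N}=\{\nabla S\neq 0\}$, and all of its lemmas (Lemmas \ref{lem-Gijk}--\ref{lem3}) become vacuous the moment $S$ is constant, so the paper's machinery supplies no mechanism for your step. Moreover, the very form of the statement shows this route cannot be what is intended: if constancy of $H$ and $S$ implied isoparametricity with $g\in\{1,2\}$, one would obtain a full classification in which $H^2$ takes only the two discrete values $2$ and $t_0$, whereas case (2) of the corollary deliberately allows the whole interval $H^2\in(0,t_0)$ without identifying $M$ --- precisely because isoparametricity is not known to follow. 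The inequality $H^2\le t_0$ and the equality characterization come from a different kind of argument (estimates on the traceless second fundamental form carried out in the cited papers), not from a Münzner-type classification. To repair your proof you would either have to reproduce those arguments or, as the paper does, cite \cite[Theorem 1.9]{MOR19} and \cite[Theorem 1.5]{CG21} directly.
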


The paper is organized as follows.
In Sect.~\ref{sect:2}, we introduce some notations and  recall some fundamental concepts and formulae for triharmonic hypersurfaces in space forms. In Sect.~\ref{sect:3}, we show some lemmas that will be used in the proofs of main theorems. In Sect.~\ref{sect:4}, we use proof by contradiction to prove Theorem \ref{thm1.1}; we also prove the corollaries. 

\textbf{Acknowledgment}:
The first author was partially supported by Natural Science Foundation of Shannxi Province Grant No.~2020JQ-101 and the Fundamental Research Funds for the Central Universities Grant No.~310201911cx013.
The second author was partially supported by NSFC Grant No.~11831005 and No.~11671224.
The authors would like to thank Professor Haizhong Li for bringing the question to our attention and his very helpful suggestions and comments.

\section{Preliminaries and notations}\label{sect:2}
Throughout this paper, we will use  $i,j,k, \ldots$ for indices running over $\{1,\ldots , n\}$ unless otherwise declaration.

\subsection{Fundamental formulae of hypersurfaces in 
$\R$}
Let $M$ be an $n$-dimensional hypersurface in the space form $\R$. Let $\nabla$ and $\bar{\nabla}$ denote the Levi-Civita connections on $M$ and $\R$ respectively. The (1,3)-type and (0,4)-type Riemannian curvature tensors of $M$ are respectively given by 
\begin{align*}
	R(X,Y)Z&=(\nabla_X\nabla_Y-\nabla_Y\nabla_X-\nabla_{[X,Y]})Z,\\
	R(X,Y,Z,W)&=\l R(X,Y)W, Z\r;
\end{align*}
 the Gauss and Weingarten formulae are respectively given by 
\begin{align*}
	\bar{\nabla}_XY=\nabla_XY+h(X,Y)\xi,\quad
	\bar{\nabla}_{X}\xi=-A(X).
\end{align*}
Here $X, Y, Z, W$ are tangent vector fields on $M$,  $\xi$ is the unit normal vector field on $M$, $h$ is the second fundamental form of $M$, and $A$ is the shape operator. 

We choose an orthonormal frame $\{e_i\}_{i=1}^n$ of $M$ and suppose $\nabla_{e_i}e_j=\sum_k\Gamma_{ij}^{k}e_k$.
By denoting 
$R_{ijkl}=R(e_i,e_j,e_k,e_l), h_{ij}=h(e_i,e_j), h_{ijk}=e_kh_{ij}-h(\nabla_{e_k}e_i,e_j)-h(e_i,\nabla_{e_k}e_j)$,
 we obtain
\begin{align}
R_{ijkl}&=(e_i\Gamma_{jl}^{k}-e_j\Gamma_{il}^{k})+\sum_{m}(\Gamma_{jl}^{m}\Gamma_{im}^{k}-\Gamma_{il}^{m}\Gamma_{jm}^{k}-(\Gamma_{ij}^{m}-\Gamma_{ji}^{m})\Gamma_{ml}^{k}),\label{eq-curva}\\
h_{ijk}&=e_kh_{ij}-\sum_{l}(\Gamma_{ki}^{l}h_{lj}+\Gamma_{kj}^{l}h_{il}).\label{eq-hijk}
\end{align}
Hence, the Gauss-Codazzi equations are respectively given by
\begin{align}\label{gauss}
R_{ijkl}&=(\delta_{ik}\delta_{jl}-\delta_{il}\delta_{jk})c+( h_{ik}h_{jl}-h_{il}h_{jk}),\\
\label{codazzi}
h_{ijk}&=h_{ikj}.
\end{align}

\subsection{Triharmonic hypersurfaces in $\R$}
Let $\phi: (M^{n},g)\to (N,\bar{g})$.
The first variational formula of $E_k$ (cf. \cite{Wang1989}) for $k=3$ gives
\begin{equation*}
\tau_3(\phi)=\barlp^{2}\tau(\phi)-\sum_{i}\bar{R}\big(\barlp\tau(\phi),d\phi(e_i)\big)d\phi(e_i)-\sum_{i}\bar{R}\big(\nabla^{\phi}_{e_i}\tau(\phi),\tau(\phi)\big)d\phi(e_i),
\end{equation*}
where $\nabla^{\phi}$ is the induced connection on the bundle $\phi^{-1}TN$ and  $\barlp=-\sum_{i}(\nabla^{\phi}_{e_i}\nabla^{\phi}_{e_i}-\nabla^{\phi}_{\nabla_{e_i}e_i})$ is the \emph{rough Laplacian} on the section of $\phi^{-1}TN$. 

By a direct computation (cf. \cite[Eq. (2.8)]{MOR19}), a CMC hypersurface of $\R$ is triharmonic if and only if 
\begin{equation}\label{tri-cmc}
\begin{cases}
H(\Delta S+S^2-cnS-cn^{2}H^{2})=0, \\
HA(\nabla S)=0.
\end{cases}
\end{equation}
Here $H=\frac{1}{n}
(\sum\limits_i h_{ii})$ and  $S=\sum\limits_{i,j}(h_{ij})^2$ denote the mean curvature  function of $M$ and the
squared norm of the second fundamental form respectively. We reminder the readers that the Laplacian 
$\Delta=-\sum_{i}(\nabla_{e_i}\nabla_{e_i}-\nabla_{\nabla_{e_i}e_i})$ here has a sign opposite to the usual definition.

Clearly \eqref{tri-cmc} always holds for $H=0$, which is the trivial case. From now on, we assume that $M$ is not minimal, then \eqref{tri-cmc} becomes
\begin{subnumcases}{}
(\Delta S+S^2-cnS-cn^{2}H^{2})=0 \label{tri-1'}\\
A(\nabla S)=0.\label{tri-2'}
\end{subnumcases}

\section{Some Lemmas}\label{sect:3}

In this section, we consider the general case of $d (\ge 4)$ distinct principal curvatures and prove some lemmas.

Obviously  $\mathcal{N}:=\{p\in M: \nabla S(p)\neq 0 \}$ is an open set. From now on, we assume that  $\mathcal{N}\neq \emptyset$ and work in $\mathcal{N}$.  It follows from \eqref{tri-2'} that $\nabla S$ is a principal direction with the corresponding principal curvature $0$. Without loss of generality, we can choose an orthonormal frame $\{e_i\}$ such that $e_1$ is parallel to $\nabla S$ and the shape operator $A$ is diagonalized with respect to $\{e_i\}$, i.e.,  $h_{ij}=\lambda_i\delta_{ij}$, where $\lambda_i$ is the principal curvature and $\lambda_1=0$.

Assume that $M$ has $d$ distinct principal curvatures $\mu_1=0, \mu_2,\cdots,\mu_d$ with $d\ge 4$. Without loss of generality, we suppose 
\begin{equation}
	\lambda_i=\mu_\alpha \mbox{ when $i\in I_\alpha$,}
\end{equation}
where 
\begin{equation}
	I_\alpha=\Big\{\sum_{0\le \beta\le \alpha-1}n_\beta+1,\cdots, \sum_{0\le \beta\le \alpha}n_\beta\Big\}
\end{equation}
with $n_0=0$ and $n_\alpha\in \mathbb{Z}_{+} $  satisfying $\sum_{1\le \alpha\le d}n_\alpha=n$, namely, $n_\alpha$ is the multiplicity of $\mu_\alpha$.
For convenience, we will use the range of indices $1\le \alpha,\beta, \gamma,\cdots\le d$ except special declaration. Then we have the following lemma.

\begin{lem}\label{lem-Gijk}
 The coefficient $\Gamma_{ij}^{k}$ satisfies:
	\begin{enumerate} 
		\item \label{G1}$\Gamma_{ij}^{k}=-\Gamma_{ik}^{j}$.
		\item \label{G2} $\Gamma_{ii}^{k}=\frac{e_k\lambda_i}{\lambda_i-\lambda_k}$ for $i\in I_\alpha$ and $k\notin I_\alpha$.
		\item $\Gamma_{ij}^{k}=\Gamma_{ji}^{k}$ if the indices satisfy one of the following conditions:
		\begin{enumerate}[(3a)] \label{G3}
			\item \label{G3-1} $i,j\in I_\alpha$ but $k\notin I_\alpha$;
			\item \label{G3-2}$i,j\ge 2$ and $k=1$.
		\end{enumerate}
		\item $\Gamma_{ij}^{k}=0$ if the indices satisfy one of the following conditions:
		\begin{enumerate}[(4a)] \label{G4}
			\item \label{G4-1}$j=k$;
			\item \label{G4-3} $i=j\in I_1$ and $k\notin I_1$;
			\item \label{G4-4}$i,k\in I_\alpha, i\neq k$ and $j\notin I_\alpha$;
			\item \label{G4-5} $i,j\ge 2, i\in I_\alpha, j\in I_\beta $ with $\alpha\neq \beta$ and $k=1$.
		\end{enumerate}
		\item \label{G2'}
	$\Gamma_{ji}^{k}=\frac{\lambda_j-\lambda_k}{\lambda_i-\lambda_k}\Gamma_{ij}^{k}, \Gamma_{ki}^{j}=\frac{\lambda_k-\lambda_j}{\lambda_i-\lambda_j}\Gamma_{ik}^{j}$ for $\lambda_i,\lambda_j$ and $\lambda_k$ are mutually different.
	\item \label{G2''}
	$\Gamma_{ij}^{k}\Gamma_{ji}^{k}+\Gamma_{ik}^{j}\Gamma_{ki}^{j}+\Gamma_{jk}^{i}\Gamma_{kj}^{i}=0$  for $\lambda_i,\lambda_j$ and $\lambda_k$ are mutually different.
	\end{enumerate}
\end{lem}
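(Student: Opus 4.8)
The plan is to read off every identity from three ingredients only: the compatibility of the Levi-Civita connection with the orthonormal principal frame, the Codazzi equation \eqref{codazzi} written via \eqref{eq-hijk}, and the structural facts already secured on $\mathcal{N}$, namely $\lambda_1=0$ and $\nabla S=(e_1S)e_1$ with $e_1S\neq 0$. First I would prove item (1): differentiating $\langle e_j,e_k\rangle=\delta_{jk}$ along $e_i$ gives $\Gamma_{ij}^{k}+\Gamma_{ik}^{j}=0$. Item (4a) is then immediate, since $\Gamma_{ij}^{j}=-\Gamma_{ij}^{j}$.

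Next I would feed $h_{ij}=\lambda_i\delta_{ij}$ into \eqref{eq-hijk}. For $i\neq j$ this yields $h_{ijk}=-\lambda_j\Gamma_{ki}^{j}-\lambda_i\Gamma_{kj}^{i}$, while on the diagonal $h_{iik}=e_k\lambda_i$ and $h_{iki}=(\lambda_i-\lambda_k)\Gamma_{ii}^{k}$ (using item (1) to kill $\Gamma_{ki}^{i}$ and to rewrite $\Gamma_{ik}^{i}=-\Gamma_{ii}^{k}$). The Codazzi symmetry $h_{iik}=h_{iki}$ then gives item (2). For three distinct indices, $h_{ijk}=h_{ikj}$ together with item (1) collapses to the basic relation
\[
(\lambda_i-\lambda_j)\Gamma_{ki}^{j}=(\lambda_i-\lambda_k)\Gamma_{ji}^{k}.
\]
Cyclically permuting $(i,j,k)$ here and re-substituting via item (1) produces the symmetric relation
\[
(\lambda_j-\lambda_k)\Gamma_{ij}^{k}=(\lambda_i-\lambda_k)\Gamma_{ji}^{k},
\]
which is precisely item (5) (its second half follows by interchanging $j$ and $k$).

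With these two relations in hand the remaining statements are short. Setting $\lambda_i=\lambda_j\neq\lambda_k$ in the basic relation forces $\Gamma_{ji}^{k}=0$, hence also $\Gamma_{ij}^{k}=0$; this gives item (3a) (both sides vanish) and, through item (1), item (4c). Item (4b) is read off from item (2): when $i\in I_1$ one has $\lambda_i\equiv 0$, so $e_k\lambda_i=0$ and $\Gamma_{ii}^{k}=0$. For item (3b) I would invoke the one genuinely external input: since $\nabla S=(e_1S)e_1$, the Hessian satisfies $\nabla^2S(e_i,e_j)=\delta_{j1}\,e_i(e_1S)-(e_1S)\Gamma_{ij}^{1}$, and its symmetry in $i,j$ makes the $\delta$-terms drop for $i,j\ge 2$, leaving $(e_1S)(\Gamma_{ij}^{1}-\Gamma_{ji}^{1})=0$; as $e_1S\neq 0$ on $\mathcal{N}$ this yields $\Gamma_{ij}^{1}=\Gamma_{ji}^{1}$. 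Item (4d) then follows by combining item (3b) with the $k=1$ case of item (5): since $\lambda_1=0$ we get $\lambda_j\Gamma_{ij}^{1}=\lambda_i\Gamma_{ji}^{1}=\lambda_i\Gamma_{ij}^{1}$, and $\lambda_i\neq\lambda_j$ forces $\Gamma_{ij}^{1}=0$.

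Finally, item (6) is a purely algebraic consequence of items (1) and (5): writing $x=\Gamma_{ij}^{k}$, item (1) turns the three products into $xy-xu+yu$ with $y=\Gamma_{ji}^{k}$ and $u=\Gamma_{ki}^{j}$, and item (5) gives $y=\tfrac{\lambda_j-\lambda_k}{\lambda_i-\lambda_k}x$ and $u=\tfrac{\lambda_j-\lambda_k}{\lambda_i-\lambda_j}x$; the resulting coefficient of $x^2$ collapses to zero by a one-line partial-fraction identity. I expect the only delicate bookkeeping to be the cyclic manipulation of Codazzi that produces the symmetric relation of item (5) — keeping straight which index sits upstairs while applying item (1) — together with the observation that item (3b), unlike everything else, cannot be obtained from Gauss–Codazzi alone and must draw on the triharmonic condition \eqref{tri-2'} through $\nabla S\parallel e_1$.
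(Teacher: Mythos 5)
Your proof is correct and follows essentially the same route as the paper: item (1) from metric compatibility, items (2) and (5) from Codazzi via \eqref{eq-hijk}, the vanishing statements by specializing those relations, and item (6) as an algebraic consequence of (1) and (5). Your use of Hessian symmetry for (3b) is just the paper's commutator argument $[e_i,e_j]S=(\Gamma_{ij}^{1}-\Gamma_{ji}^{1})e_1S$ in different clothing, and your extra observation that in (3a) both sides actually vanish (for $i\neq j$) is a correct strengthening, consistent with (4c) via item (1).
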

\begin{proof}
	Items \eqref{G1}--\eqref{G4} were proved in \cite[Lemma 3.1]{CG21} for the case of 3 distinct principal curvatures, and the proof is also valid for $d\ge 4$. Here we restate it for the readers' convenience, and also give the proof of Items \eqref{G2'} and \eqref{G2''}.
	
	Item \eqref{G1} is directly from $e_i\l e_j,e_k \r=0$, which also implies \eqref{G4-1}.
	
	By  \eqref{eq-hijk}, \eqref{codazzi} and \eqref{G1}, we have 
	\begin{equation}\label{eq-1}
		e_k\lambda_i=h_{iik}=h_{iki}=\Gamma_{ii}^{k}(\lambda_i-\lambda_k) \mbox{ for $i\neq k$,}
	\end{equation}
	\begin{equation}\label{eq-2}
		\Gamma_{ij}^{k}(\lambda_j-\lambda_k)=h_{kji}=h_{kij}=\Gamma_{ji}^{k}(\lambda_i-\lambda_k) \mbox{ for distinct $i,j,k$.}
	\end{equation}
	
	Items \eqref{G2} and \eqref{G2'} follow from \eqref{eq-1} and \eqref{eq-2} respectively, and 
	Item \eqref{G2''} follows from Items \eqref{G1} and \eqref{G2'}.

	\eqref{G3-1} is directly from \eqref{eq-2} since $\lambda_j-\lambda_k=\lambda_i-\lambda_k\neq 0$.
		
		By the assumptions and the choice of $\{e_i\}$, we have $e_1S\neq 0, e_iS=0 (2\le i \le n)$. Then for $2\le i,j\le n, 0=(e_ie_j-e_ie_i)S=[e_i,e_j]S=(\nabla_{e_i}e_j-\nabla_{e_j}e_i)S=(\Gamma_{ij}^{1}-\Gamma_{ji}^{1})e_1S$, so $\Gamma_{ij}^{1}=\Gamma_{ji}^{1}.$ That is \eqref{G3-2}.
	
	\eqref{G4-3} is directly from \eqref{eq-1} since $e_k\lambda_i=0$ for $i\in I_1$.

	Again from \eqref{eq-2} we have $\Gamma_{ij}^{k}(\lambda_j-\lambda_k)=0$ for $i,k\in I_p, j\notin I_p, i\neq k$. Since $\lambda_j-\lambda_k\neq 0$, we obtain \eqref{G4-4}. 
	
	If $i,j\ge 2$, then from \eqref{eq-2} and \eqref{G3-2} we can derive $\Gamma_{ij}^{1}(\mu_\beta-\mu_\alpha)=0$. Since $\alpha\neq \beta$, we have $\mu_\beta-\mu_\alpha\neq 0$ and then obtain \eqref{G4-5}. 	
\end{proof}

\begin{lem}\label{lem2}
	For $2\le \alpha\le d$, denote $P_\alpha=\frac{e_1\mu_\alpha}{\mu_\alpha}$. If the multiplicity of the zero principal curvature is one, then we have 
	\begin{equation}\label{eq-lem2}
	e_1P_\alpha-P_\alpha^{2}=c.
	\end{equation}
\end{lem}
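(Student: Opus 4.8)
My plan is to obtain \eqref{eq-lem2} by evaluating a single component of the curvature tensor in two ways. Fix $i\in I_\alpha$ with $\alpha\ge 2$, so that $\lambda_i=\mu_\alpha\neq 0$ while $\lambda_1=0$. On one hand, the Gauss equation \eqref{gauss} gives immediately
\[
R_{1i1i}=c+h_{11}h_{ii}-h_{1i}^2=c.
\]
On the other hand I would expand $R_{1i1i}$ intrinsically through \eqref{eq-curva}; comparing the two expressions should isolate $e_1P_\alpha-P_\alpha^2$ on the geometric side and leave exactly $c$, which is precisely \eqref{eq-lem2}.

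The key preliminary identities are $\Gamma_{ii}^1=\dfrac{e_1\lambda_i}{\lambda_i-\lambda_1}=\dfrac{e_1\mu_\alpha}{\mu_\alpha}=P_\alpha$ from Lemma \ref{lem-Gijk}\eqref{G2}, together with $\Gamma_{i1}^i=-\Gamma_{ii}^1=-P_\alpha$ and $\Gamma_{1i}^1=-\Gamma_{11}^i=0$ from Lemma \ref{lem-Gijk}\eqref{G1} (the latter also using \eqref{G4-3}). Substituting $(i,j,k,l)=(1,i,1,i)$ into \eqref{eq-curva} gives
\[
R_{1i1i}=e_1\Gamma_{ii}^1-e_i\Gamma_{1i}^1+\sum_{m}\Big(\Gamma_{ii}^m\Gamma_{1m}^1-\Gamma_{1i}^m\Gamma_{im}^1-(\Gamma_{1i}^m-\Gamma_{i1}^m)\Gamma_{mi}^1\Big).
\]
I expect the first term to contribute $e_1P_\alpha$, the term $e_i\Gamma_{1i}^1$ to vanish, and the only surviving summand to be $m=i$, which produces $-(\Gamma_{1i}^i-\Gamma_{i1}^i)\Gamma_{ii}^1=-P_\alpha^2$. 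Granting that every remaining term is zero, the right-hand side collapses to $e_1P_\alpha-P_\alpha^2$, and equating with $c$ yields the claim.

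The main work, and the place where the hypothesis $n_1=1$ is essential, is verifying that all other summands drop out. For $m\notin\{1,i\}$ the coefficients $\Gamma_{im}^1$ and $\Gamma_{mi}^1$ vanish: by Lemma \ref{lem-Gijk}\eqref{G4-4} (combined with \eqref{G1}) when $m$ lies in the same class $I_\alpha$ as $i$, and by Lemma \ref{lem-Gijk}\eqref{G4-5} when $m$ lies in a third class; this kills the second and third sums. The delicate term is $\sum_m\Gamma_{ii}^m\Gamma_{1m}^1$. Writing $\Gamma_{1m}^1=-\Gamma_{11}^m$, one has $\Gamma_{11}^m=0$ for $m\notin I_1$ by \eqref{G4-3} and for $m=1$ by antisymmetry, so the sum is harmless \emph{provided there is no index $m\in I_1$ other than $1$}. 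Exactly such indices exist when the zero principal curvature has multiplicity greater than one, and then neither $\Gamma_{11}^m$ nor $\Gamma_{ii}^m$ need vanish, so this sum would survive and \eqref{eq-lem2} would fail. The assumption $n_1=1$ forces $I_1=\{1\}$ and removes these indices entirely, which is what makes the argument close. I would therefore organize the proof as: reduce via Lemma \ref{lem-Gijk} to the two coefficients $\Gamma_{ii}^1$ and $\Gamma_{i1}^i$, invoke $n_1=1$ to discard $\sum_m\Gamma_{ii}^m\Gamma_{1m}^1$, and finally compare with $R_{1i1i}=c$ from \eqref{gauss}.
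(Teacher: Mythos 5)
Your proposal is correct and takes essentially the same route as the paper's own proof: both expand $R_{1i1i}$ via \eqref{eq-curva}, use Lemma \ref{lem-Gijk} together with $I_1=\{1\}$ to collapse everything to $e_1\Gamma_{ii}^{1}-(\Gamma_{ii}^{1})^{2}=e_1P_\alpha-P_\alpha^{2}$, and compare with $R_{1i1i}=c$ from the Gauss equation \eqref{gauss}. Your pinpointing of the sum $\sum_m\Gamma_{ii}^{m}\Gamma_{1m}^{1}$ (i.e.\ the possible indices $m\in I_1\setminus\{1\}$) as the exact place where the multiplicity-one hypothesis is indispensable coincides with how the paper uses $I_1=\{1\}$ in its computation.
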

\begin{proof}
 By the assumption, we have $I_1=\{1\}$.
	For $j\in I_{\alpha}$, it follows from   \eqref{eq-curva} and Lemma
	  \ref{lem-Gijk} that 
	\begin{align*}  
	 R_{1j1j}&=(e_1\Gamma_{jj}^{1}-e_j\Gamma_{1j}^{1})+\sum_{m}(\Gamma_{jj}^{m}\Gamma_{1m}^{1}-\Gamma_{1j}^{m}\Gamma_{jm}^{1}-\Gamma_{1j}^{m}\Gamma_{mj}^{1}+\Gamma_{j1}^{m}\Gamma_{mj}^{1}) \\
	 &=(e_1\Gamma_{jj}^{1}+e_j\Gamma_{11}^{j})-\sum_{m\ge 2}(\Gamma_{jj}^{m}\Gamma_{11}^{m}+\Gamma_{1j}^{m}\Gamma_{jm}^{1}-\Gamma_{1j}^{m}\Gamma_{m1}^{j}-\Gamma_{j1}^{m}\Gamma_{mj}^{1}) \\
	 &=e_1\Gamma_{jj}^{1}+\sum_{m\in  I_\alpha}(\Gamma_{1j}^{m}\Gamma_{j1}^{m}+\Gamma_{1j}^{m}\Gamma_{m1}^{j}+\Gamma_{j1}^{m}\Gamma_{mj}^{1})-\sum_{m\notin I_1\cup I_\alpha}(\Gamma_{1j}^{m}\Gamma_{jm}^{1}+\Gamma_{1j}^{m}\Gamma_{mj}^{1}-\Gamma_{j1}^{m}\Gamma_{mj}^{1})\\
	 &=e_1\Gamma_{jj}^{1}+\Gamma_{j1}^{j}\Gamma_{jj}^{1}=e_1\Gamma_{jj}^{1}-(\Gamma_{jj}^{1})^2=e_1P_\alpha-P_\alpha^2.
	\end{align*}	
On the other hand, 	
	 we have  $R_{1j1j}=c$ from the Gauss equation \eqref{gauss}, so we complete the proof.	
\end{proof}

\begin{lem}\label{lem3}
	Notations and assumptions are as in Lemma \ref{lem2}. If $M$ has constant mean curvature, then for any $q\in \mathbb{Z}_+$ we have 
	\begin{equation}\label{eq3.6}
	f(q):=\sum_{2\le \alpha\le d}n_{\alpha}\mu_\alpha P_\alpha^{q}=\begin{cases}
			0, & \mbox{when $q$ is odd,},\\
		\dfrac{(q-1)!! }{q!!}(-c)^{q/2}nH, &  \mbox{when $q$ is even.}
		\end{cases}
	\end{equation}
\end{lem}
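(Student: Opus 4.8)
The plan is to reduce everything to a single three-term recursion in $q$ driven by the Riccati-type identity of Lemma~\ref{lem2}, and then run an induction in which \emph{constancy propagates}. The only two facts I will feed in are the defining relation $e_1\mu_\alpha=\mu_\alpha P_\alpha$ and the identity $e_1P_\alpha=P_\alpha^2+c$ coming from Lemma~\ref{lem2}.

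First I would differentiate each summand along $e_1$. Using the product rule together with $e_1\mu_\alpha=\mu_\alpha P_\alpha$ and $e_1P_\alpha=P_\alpha^2+c$ gives
\[
e_1\big(\mu_\alpha P_\alpha^{q}\big)=(q+1)\,\mu_\alpha P_\alpha^{q+1}+cq\,\mu_\alpha P_\alpha^{q-1},
\]
and summing over $\alpha$ with the weights $n_\alpha$ yields the recursion
\[
e_1 f(q)=(q+1)\,f(q+1)+cq\,f(q-1),\qquad q\ge 1 .
\]
This is the engine of the whole argument. I would then record the two base values. The CMC hypothesis says $\sum_\alpha n_\alpha\mu_\alpha=nH$ is constant, i.e. $f(0)=nH$, which matches the even formula at $q=0$; and $f(1)=\sum_\alpha n_\alpha\mu_\alpha P_\alpha=\sum_\alpha n_\alpha e_1\mu_\alpha=e_1(nH)=0$, matching the odd case.

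The main step is a strong induction on $q$, and the key observation is that once $f(q)$ is known to be a constant, the recursion becomes purely algebraic. Assuming $f(q-1)$ and $f(q)$ are the claimed constants, constancy of $f(q)$ gives $e_1 f(q)=0$, so the recursion collapses to $(q+1)f(q+1)=-cq\,f(q-1)$, whence $f(q+1)$ is again constant and equal to $-\tfrac{cq}{q+1}f(q-1)$. If $q$ is even then $f(q-1)=0$ by hypothesis, forcing $f(q+1)=0$; if $q$ is odd, writing $q+1=2m$ the factor $-\tfrac{c(2m-1)}{2m}$ applied to $\tfrac{(2m-3)!!}{(2m-2)!!}(-c)^{m-1}nH$ reproduces exactly $\tfrac{(2m-1)!!}{(2m)!!}(-c)^{m}nH$. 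This closes the induction.

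The computations here are routine; the only point that needs genuine care is the first step. The recursion is \emph{closed} precisely because Lemma~\ref{lem2} supplies the Riccati relation $e_1P_\alpha=P_\alpha^2+c$, and that lemma is exactly where the hypothesis that the zero principal curvature has multiplicity one is used (it forces $I_1=\{1\}$). After the recursion and the two base cases are in place, no analysis remains: because $e_1 f(q)=0$ determines $f(q+1)$ algebraically and pointwise, no integration is needed, and the stated closed form is a formal consequence of $f(0)=nH$, $f(1)=0$, and the recursion.
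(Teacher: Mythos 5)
Your proposal is correct and follows essentially the same route as the paper: both derive the three-term recursion $(q+1)f(q+1)+cq\,f(q-1)=e_1f(q)$ from the product rule together with the Riccati identity $e_1P_\alpha=P_\alpha^2+c$ of Lemma~\ref{lem2}, and then induct using the fact that constancy of $f(q)$ kills the left-hand side. The only cosmetic difference is that you seed the induction with $f(0)=nH$ and $f(1)=0$, whereas the paper computes $f(1)=0$ and $f(2)=-\tfrac{1}{2}cnH$ explicitly before inducting; the two bookkeeping choices are equivalent.
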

\begin{proof}
	Since $H$ is constant, differentiating $nH=\sum_{2\le \alpha\le d}n_{\alpha}\mu_\alpha$  by $e_1$, we obtain $0=\sum_{2\le \alpha\le d}n_{\alpha}e_1\mu_\alpha$, that is, 
	\begin{equation}\label{P1}
		\sum_{2\le \alpha\le d}n_{\alpha}\mu_\alpha P_\alpha=0.
	\end{equation}
	Differentiating \eqref{P1} by $e_1$, from \eqref{eq-lem2} we have
	\begin{align*}
		0&=\sum_{2\le \alpha\le d}n_{\alpha}\big((e_1\mu_\alpha) P_\alpha+\mu_\alpha e_1(P_\alpha)\big)\\
		&=\sum_{2\le \alpha\le d}n_{\alpha}\big(\mu_\alpha P_\alpha^2+\mu_\alpha (P_\alpha^2+c)\big)
	\\
		&=2\sum_{2\le \alpha\le d}n_{\alpha}\mu_\alpha P_\alpha^2+cnH,
	\end{align*}
	which is equivalent to 
	\begin{align}\label{P2}
		\sum_{2\le \alpha\le d}n_{\alpha}\mu_\alpha P_\alpha^2=-\frac{1}{2}cnH.
	\end{align}
We have obtained that \eqref{eq3.6} holds for $q=1, 2$. so we can prove that it holds for general $q$ by induction. 

Whenever $q\ge 2$ is even or odd, we differentiate   \eqref{eq3.6} by $e_1$ and then obtain that 
\begin{align*}
	0&= \sum_{2\le \alpha\le d}n_{\alpha}\big((e_1\mu_\alpha) P_\alpha^q+q\mu_\alpha P_\alpha ^{q-1}e_1(P_\alpha)\big)\\
	&= \sum_{2\le \alpha\le d}n_{\alpha}\big(\mu_\alpha P_\alpha^{q+1}+q\mu_\alpha P_\alpha ^{q-1}(P_\alpha^{2}+c)\big) \\
	&=(q+1)f(q+1)+qcf(q-1).
\end{align*}

If $q$ is even, then both $q-1$ and $q+1$ are odd. So $f(q+1)=0$ is from $f(q-1)=0$.

If $q$ is odd, then both $q-1$ and $q+1$ are even. So  
\begin{align*}
	f(q+1)=&-\frac{qc}{q+1}f(q-1)
	=-\frac{qc}{q+1}\times\dfrac{(q-2)!! }{(q-1)!!}(-c)^{(q-1)/2}nH\\
	=&\dfrac{q!! }{(q+1)!!}(-c)^{(q+1)/2}nH.
\end{align*}
\end{proof}

\section{Proofs of Theorem \ref{thm1.1} and its corollaries}\label{sect:4}
In this section, we will prove Theorem \ref{thm1.1} and its corollaries.
\begin{proof}[Proof of Theorem \ref{thm1.1}]
We derive a contradiction from the assumption that $\mathcal{N}=\{p\in M: \nabla S(p)\neq 0 \}\neq \emptyset$. Under this condition, we can use Lemmas in Sect.~\ref{sect:3}.
	
Since $d=4$ and there are 3 distinct non-zero principal curvatures, we replace $\mu_2, \mu_3, \mu_4$ by $\lambda, \mu, \nu$ respectively, and replace $P_2, P_3, P_4$ by $P, Q, R$ respectively.

By taking $q=1, 3, 5$ respectively in Lemma \ref{lem3}, we obtain a linear system of the principal curvatures $\lambda,\mu,\nu$
\begin{subnumcases}{\label{lin-sys}}
r P\lambda+s Q\mu+t R\nu=0, \label{lin-sys1}\\
r P^3\lambda+s Q^3\mu+t R^3\nu=0,\label{lin-sys2} \\
r P^5\lambda+s Q^5\mu+t R^5\nu=0, \label{lin-sys3}
\end{subnumcases}
which has a non-zero solution. Hence, on $ \mathcal{N}$ we must have 
\begin{equation*}
\det
\begin{bmatrix}
rP & sQ & tR \\
rP^3 & sQ^3 & tR^3 \\
rP^5 & sQ^5 & tR^5
\end{bmatrix}
=rstPQR(P^2-Q^2)(Q^2-R^2)(R^2-P^2)=0.
\end{equation*}

Besides, by taking $q=2, 4$ respectively in Lemma \ref{lem3}, we obtain
\begin{subnumcases}{\label{lin-sys-even}}
	r P^2\lambda+s Q^2\mu+t R^2\nu=-\dfrac{1}{2}ncH,\label{lin-sys-even1} \\
	r P^4\lambda+s Q^4\mu+t R^4\nu=\dfrac{3}{8}nc^2H.\label{lin-sys-even2}
\end{subnumcases}

Now we check all possible cases.

\textbf{Case (1):} $PQR\neq 0$ at some $p\in \mathcal{N}$. 
Then $(P^2-Q^2)(Q^2-R^2)(R^2-P^2)=0$ at $p$. 
Without loss of generality, we assume $P^2-Q^2=0$,  i.e., $P=\pm Q$. Now \eqref{lin-sys1} and \eqref{lin-sys2} become
\begin{equation*}
	\begin{cases}
		(r\lambda\pm s\mu)P +t\nu R=0, \\
		(r\lambda\pm s\mu)P^3 +t\nu R^3=0. 
	\end{cases}
\end{equation*}
Since $t\nu$ is not zero, we have
\begin{equation*}
	\det\begin{bmatrix}
		P & R \\
		P^3 & R^3 
	\end{bmatrix}=PR(R^2-P^2)=0,
\end{equation*}
so $R^2-P^2=0$. Hence, $P^2=Q^2=R^2$ at $p$. Now \eqref{lin-sys-even} becomes
	\begin{subnumcases}{}
	-\dfrac{1}{2}ncH=(r\lambda+s\mu+t\nu)P^2=nP^2H,
	\label{4.3}\\
	\label{PP4}
	\dfrac{3}{8}nc^2H=(r\lambda+s\mu+t\nu)P^4=nP^4H.\label{4.4}
	\end{subnumcases}
Since $H\neq 0$, from \eqref{4.3} and \eqref{4.4} we obtain $c=-2P^2$ and $3c^2=8P^4$ respectively, which imply $c=P=0$, a contradiction.

\textbf{Case (2):} $PQR=0$ on  $\mathcal{N}$, since we've showed Case (1) is impossible. This means there is at lease one quantity among $P, Q, R$ equaling to $0$. If we know two quantities among $P, Q, R$ being 0, then the third one must be 0 from \eqref{lin-sys1}.

\textbf{Case (2.1):} $P=Q=R=0$ at some $p\in\mathcal{N}$.  We have 
\begin{align*}
	\frac{1}{2}e_1S&=\frac{1}{2}e_1(r\lambda^2+s\mu^2+t\nu^2) \\
	&=r\lambda e_1\lambda+s\mu e_1\mu+t\nu e_1\nu \\
	&=r\lambda^2 P+s\mu^2 Q+t\nu^2 R=0 \mbox{ at $p$,}
\end{align*}
which contradicts $\nabla S|_p\neq 0$ (recall that $e_mS$  always equals to 0 for $m\ge 2$).

\textbf{Case (2.2):} We've ruled out Case (2.1), so  for any given $p\in\mathcal{N}$, without loss of generality, we can assume $P=0, Q\neq 0, R\neq 0$ at $p$, and then this also holds on a neighborhood $U$ containing $p$. The linear system \eqref{lin-sys} implies
\begin{subnumcases}{}
 sQ\mu+ tR\nu=0, \label{4.4a}\\
 sQ^3\mu+ tR^3\nu=0.\label{4.4b}
\end{subnumcases}
Since $\mu,\nu$ are not zero, we have
\begin{equation*}
\det\begin{bmatrix}
sQ & tR \\
sQ^3 & tR^3 
\end{bmatrix}=stQR(R^2-Q^2)=0.
\end{equation*}
Since $QR\neq 0$, we have $Q^2-R^2 = 0$  on $U$. We can assume $U$ is connected, then there are two cases.

\textbf{Case (1.2.1):} $Q= -R$ on $U$. 
It follows from \eqref{4.4a} that $s\mu=t\nu$ . Further we have $se_1\mu= te_1\nu$, i.e., $s\mu Q=t\nu R$. 
Since $s\mu=t\nu\ne 0$, we derive that $Q=R$ and then $Q=R=0$, a contradiction. 

\textbf{Case (1.2.2):} $Q= R$ on $U$. From \eqref{4.4a} we have $s\mu+t\nu=0$ and then $\lambda=nH/r$ is constant since $r\lambda+s\mu+t\nu=nH$. Hence, $e_m\lambda=0$ for $1\le m \le n$.

On the other hand,  $0=\dfrac{1}{2}e_mS=\dfrac{1}{2}e_m(r\lambda^2+s\mu^2+t\nu^2)=s\mu e_m\mu+t\nu e_m\nu$ for $2\le m\le n$, so we have
\begin{equation*}
\begin{cases}
se_m\mu +te_m\nu =0, \\
s\mu e_m\mu +t\nu e_m\nu =0,
\end{cases}
\end{equation*}
which follows $e_m \mu=e_m \nu=0$ for $2\le m\le n$,

Let $j\in I_2, k\in I_3, l\in I_4$. We derive
\begin{equation}\label{eq4.1}
	\Gamma_{kk}^{j}=\Gamma_{jj}^{k}=\Gamma_{jj}^{l}=\Gamma_{kk}^{l}=\Gamma_{ll}^{k}=\Gamma_{ll}^{j}=0
\end{equation}
 from Item \eqref{G2} of Lemma \ref{lem-Gijk}.

From \eqref{lin-sys-even1} we have
$-\dfrac{1}{2}ncH=s\mu Q^2+t\nu R^2=0$ since $s\mu +t\nu =0$ and $Q^{2}=R^{2}$. This can only happen when $c=0$ since $H\neq 0$. When $c=0$, by Eq.~\eqref{eq-curva} and  Lemma \ref{lem-Gijk}, the Gauss equation \eqref{gauss} becomes
	\begin{align*}
	 \lambda\mu=R_{jkjk}&=(e_j\Gamma_{kk}^{j}-e_k\Gamma_{jk}^{j})+\sum_{1\le m\le n}(\Gamma_{kk}^{m}\Gamma_{jm}^{j}-\Gamma_{jk}^{m}\Gamma_{km}^{j}-\Gamma_{jk}^{m}\Gamma_{mk}^{j}+\Gamma_{kj}^{m}\Gamma_{mk}^{j}) \\
	 &=(e_j\Gamma_{kk}^{j}+e_k\Gamma_{jj}^{k})+\sum_{1\le m\le n}(-\Gamma_{jj}^{m}\Gamma_{kk}^{m}+\Gamma_{jk}^{m}\Gamma_{kj}^{m}-\Gamma_{jm}^{k}\Gamma_{mj}^{k}-\Gamma_{km}^{j}\Gamma_{mk}^{j})\\
	 &\overset{\text{(1)}}{=}\sum_{m\in I_4}(\Gamma_{jk}^{m}\Gamma_{kj}^{m}-\Gamma_{jm}^{k}\Gamma_{mj}^{k}-\Gamma_{km}^{j}\Gamma_{mk}^{j})\\
	 &\overset{\text{(2)}}{=}2\sum_{l}\Gamma_{jk}^{l}\Gamma_{kj}^{l} \overset{\text{(3)}}{=}2\sum_{l}\dfrac{\mu-\nu}{\lambda-\nu}(\Gamma_{jk}^{l})^2.
	\end{align*}
Here we used \eqref{eq4.1} and Item \eqref{G4} of Lemma \ref{lem-Gijk} in $\overset{\text{(1)}}{=}$, Item \eqref{G2''} of Lemma \ref{lem-Gijk} in $\overset{\text{(2)}}{=}$, and Item \eqref{G2'} of Lemma \ref{lem-Gijk} in $\overset{\text{(3)}}{=}$. Therefore, we obtain 
	\begin{equation}\label{lambdamu}
	s\lambda\mu=2\sum_{k, l}\dfrac{\mu-\nu}{\lambda-\nu}(\Gamma_{jk}^{l})^2.
	\end{equation}
Similarly, we have 
	\begin{equation}\label{lambdanu}
	t\lambda\nu=2\sum_{k, l}\dfrac{\nu-\mu}{\lambda-\mu}(\Gamma_{jl}^{k})^2.
	\end{equation}
Summing up \eqref{lambdamu} and \eqref{lambdanu} we conclude that
	\begin{align*}
	0=\lambda(s\mu+t\nu)=2\sum_{k, l}(\Gamma_{jk}^{l})^2(\mu-\nu)(\frac{1}{\lambda-\nu}-\frac{1}{\lambda-\mu}),
	\end{align*}
and then $\Gamma_{jk}^{l}=0$ for any $j\in I_2, k\in I_3, l\in I_4$, which follows from \eqref{lambdamu} that $\lambda\mu=0$, a contradiction.

In conclusion, neither  Case (1)  nor  Case (2)  can happen. Therefore, $\mathcal{N}$ must be empty and we complete the proof of Theorem \ref{thm1.1}.
\end{proof}

\begin{proof}[Proof of Corollary \ref{cor1}]
	If $H\neq 0$,  by applying Theorem \ref{thm1.1} we obtain $S^2\le 0$ from \eqref{tri-1'}, which gives a contradiction. 
	
	When the dimension $n=4$, the condition of 4 distinct principal curvatures implies the multiplicity of the zero principle curvature is at most one. Otherwise, there are at most 3 distinct principal curvatures, then the conclusion is from \cite[Corollary 1.2]{CG21} (a direct corollary of Theorem \ref{thmA}).
\end{proof}

\begin{proof}[Proof of Corollary \ref{cor1.3}]
	The conclusions follow directly from Theorem \ref{thm1.1},  \cite[Theorem 1.9]{MOR19} and \cite[Theorem 1.5]{CG21}. 
	
	Combining with  \cite[Corollary 1.7]{CG21}, the argument in the case $n=4$ is analogous as in the proof of Corollary \ref{cor1}. We omit the details.
\end{proof}


\begin{thebibliography}{10}
	
	\bibitem{ACL1983}
	N.~Aronszajn, T.~M. Creese, and L.~J. Lipkin.
	\newblock {\em Polyharmonic functions}.
	\newblock Oxford Mathematical Monographs. The Clarendon Press, Oxford
	University Press, New York, 1983.
	\newblock Notes taken by Eberhard Gerlach, Oxford Science Publications.
	
	\bibitem{BFO2010}
	P.~Baird, A.~Fardoun, and S.~Ouakkas.
	\newblock Liouville-type theorems for biharmonic maps between {R}iemannian
	manifolds.
	\newblock {\em Adv. Calc. Var.}, 3(1):49--68, 2010.
	
	\bibitem{BG1992}
	P.~Baird and S.~Gudmundsson.
	\newblock {$p$}-harmonic maps and minimal submanifolds.
	\newblock {\em Math. Ann.}, 294(4):611--624, 1992.
	
	\bibitem{BL2002}
	J.-M. Burel and E.~Loubeau.
	\newblock {$p$}-harmonic morphisms: the {$1<p<2$} case and some non-trivial
	examples.
	\newblock In {\em Differential geometry and integrable systems ({T}okyo,
		2000)}, volume 308 of {\em Contemp. Math.}, pages 21--37. Amer. Math. Soc.,
	Providence, RI, 2002.
	
	\bibitem{Chen1991}
	B.-Y. Chen.
	\newblock Some open problems and conjectures on submanifolds of finite type.
	\newblock {\em Soochow J. Math.}, 17(2):169--188, 1991.
	
	\bibitem{CG21}
	H.~Chen and Z.~Guan.
	\newblock Triharmonic cmc hypersurfaces in space forms.
	\newblock {\em arXiv Apr.}, 2021.
	
	\bibitem{ES1964}
	J.~Eells, Jr. and J.~H. Sampson.
	\newblock Harmonic mappings of {R}iemannian manifolds.
	\newblock {\em Amer. J. Math.}, 86:109--160, 1964.
	
	\bibitem{FHZ20}
	Y.~Fu, M.-C. Hong, and X.~Zhan.
	\newblock On {C}hen's biharmonic conjecture for hypersurfaces in {$\Bbb{R}^5$}.
	\newblock {\em Adv. Math.}, 383:107697, 2021.
	
	\bibitem{Jiang1986}
	G.~Y. Jiang.
	\newblock {$2$}-harmonic maps and their first and second variational formulas.
	\newblock {\em Chinese Ann. Math. Ser. A}, 7(4):389--402, 1986.
	\newblock An English summary appears in Chinese Ann. Math. Ser. B {{\bf{7}}}
	(1986), no. 4, 523.
	
	\bibitem{Maeta2012}
	S.~Maeta.
	\newblock {$k$}-harmonic maps into a {R}iemannian manifold with constant
	sectional curvature.
	\newblock {\em Proc. Amer. Math. Soc.}, 140(5):1835--1847, 2012.
	
	\bibitem{Maeta2012a}
	S.~Maeta.
	\newblock The second variational formula of the {$k$}-energy and {$k$}-harmonic
	curves.
	\newblock {\em Osaka J. Math.}, 49(4):1035--1063, 2012.
	
	\bibitem{Maeta2015a}
	S.~Maeta.
	\newblock Construction of triharmonic maps.
	\newblock {\em Houston J. Math.}, 41(2):433--444, 2015.
	
	\bibitem{Maeta2015}
	S.~Maeta.
	\newblock Polyharmonic maps of order {$k$} with finite {$L^p$} k-energy into
	{E}uclidean spaces.
	\newblock {\em Proc. Amer. Math. Soc.}, 143(5):2227--2234, 2015.
	
	\bibitem{MNU2015}
	S.~Maeta, N.~Nakauchi, and H.~Urakawa.
	\newblock Triharmonic isometric immersions into a manifold of non-positively
	constant curvature.
	\newblock {\em Monatsh. Math.}, 177(4):551--567, 2015.
	
	\bibitem{MOR19}
	S.~Montaldo, C.~Oniciuc, and A.~Ratto.
	\newblock Polyharmonic hypersurfaces into space forms.
	\newblock {\em arXiv:1912.10790}, 2019.
	
	\bibitem{MR18}
	S.~Montaldo and A.~Ratto.
	\newblock New examples of {$r$}-harmonic immersions into the sphere.
	\newblock {\em J. Math. Anal. Appl.}, 458(1):849--859, 2018.
	
	\bibitem{MR18a}
	S.~Montaldo and A.~Ratto.
	\newblock Proper {$r$}-harmonic submanifolds into ellipsoids and rotation
	hypersurfaces.
	\newblock {\em Nonlinear Anal.}, 172:59--72, 2018.
	
	\bibitem{NU2011}
	N.~Nakauchi and H.~Urakawa.
	\newblock Biharmonic hypersurfaces in a {R}iemannian manifold with non-positive
	{R}icci curvature.
	\newblock {\em Ann. Global Anal. Geom.}, 40(2):125--131, 2011.
	
	\bibitem{NU2013}
	N.~Nakauchi and H.~Urakawa.
	\newblock Biharmonic submanifolds in a {R}iemannian manifold with non-positive
	curvature.
	\newblock {\em Results Math.}, 63(1-2):467--474, 2013.
	
	\bibitem{NU2018}
	N.~Nakauchi and H.~Urakawa.
	\newblock Polyharmonic maps into the {E}uclidean space.
	\newblock {\em Note Mat.}, 38(1):89--100, 2018.
	
	\bibitem{NUG2014}
	N.~Nakauchi, H.~Urakawa, and S.~Gudmundsson.
	\newblock Biharmonic maps into a {R}iemannian manifold of non-positive
	curvature.
	\newblock {\em Geom. Dedicata}, 169:263--272, 2014.
	
	\bibitem{Ou2006}
	Y.-L. Ou.
	\newblock {$p$}-harmonic morphisms, biharmonic morphisms, and nonharmonic
	biharmonic maps.
	\newblock {\em J. Geom. Phys.}, 56(3):358--374, 2006.
	
	\bibitem{OuChen}
	Y.-L. Ou and B.-Y. Chen.
	\newblock {\em Biharmonic Submanifolds and Biharmonic Maps in Riemannian
		Geometry}.
	\newblock World Scientific Publishing, Singapore, 04 2020.
	
	\bibitem{Wang1989}
	S.~B. Wang.
	\newblock The first variation formula for $k$-harmonic mapping.
	\newblock {\em Journal of Nanchang University}, 13(1), 1989.
	
	\bibitem{Wang1991}
	S.~B. Wang.
	\newblock Some results on stability of {$3$}-harmonic mappings.
	\newblock {\em Chinese Ann. Math. Ser. A}, 12(4):459--467, 1991.
	
	\bibitem{WW12}
	X.~F. Wang and L.~Wu.
	\newblock Proper biharmonic submanifolds in a sphere.
	\newblock {\em Acta Math. Sin. (Engl. Ser.)}, 28(1):205--218, 2012.
	
\end{thebibliography}
\end{document}